\newtheorem{theorem}{Theorem} \newtheorem{lemma}{Lemma}
\newtheorem{proposition}{Proposition} 
\newtheorem{corollary}{Corollary} 
\newtheorem{definition}{Definition}
\theoremstyle{definition} 
\newtheorem{remark}{Remark}
\newcommand{\dist}{\operatorname{dist}} 
\newcommand{\e}{\varepsilon}
\newcommand{\R}{\mathbb{R}}
\renewcommand{\d}{\mathrm{d}} \renewcommand{\L}{\mathbb{L}}
\newcommand{\A}{\mathcal{A}} 
 \newcommand{\id}{\mathrm{Id}}
 \renewcommand{\H}{\mathcal{H}}
\renewcommand{\L}{{\mathcal L}}
\newcommand{\cof}{\mathrm{cof}\,}
\title{Variational competition between  the full Hessian and its determinant  for convex functions}
\date{\today} 
\author[P. Gladbach]{Peter Gladbach}
\author[H. Olbermann]{Heiner Olbermann} 
\address[Peter Gladbach]{Universit\"at Bonn, 53115 Bonn, Germany}
\address[Heiner Olbermann]{UCLouvain, 1348 Louvain-la-Neuve, Belgium}
\email[Peter Gladbach]{gladbach@iam.uni-bonn.de}
\email[Heiner Olbermann]{heiner.olbermann@uclouvain.be}
\begin{document}
\maketitle

\begin{abstract}

  We prove upper and lower bounds for a variational functional for convex functions satisfying certain boundary conditions on a sector of the unit ball in two dimensions. The functional contains two terms: The full Hessian and its determinant, where the former is treated as a small perturbation in the space $L^2$ and the latter as the leading-order term, in the negative Sobolev space $W^{-2,2}$. We point out how this setting is motivated by problems in nonlinear elasticity, and obtain a corollary for a variational problem based on the so-called F\"oppl-von K\'arm\'an energy.

  %  We consider a thin elastic sheet in the shape of a sector that is clamped
  % along the curved part of the boundary, and left free at the remainder. On the curved part, the boundary conditions agree with those of a
  % conical deformation. We prove upper and lower bounds for the
  % F\"oppl-von K\'arm\'an energy under the assumption that the out-of-plane
  % component of the deformation is convex. The lower bound is optimal in the
  % sense that it matches the upper bound in the leading order  with
  % respect to the thickness of the sheet. As a corollary, we obtain a new
  % estimate for the Monge-Amp\`ere equation in two dimensions. 
  \end{abstract}

\section{Introduction}

\subsection{Statement of main result}
Let $S\subset B(0,1)\subset \R^2$ be a sector of opening angle  $2\alpha<\pi$,
\[
  S=\{r(\cos\varphi,\sin\varphi):0\leq r<1,-\alpha<\varphi<\alpha\}\,.
  \] We denote by $\Gamma$ the curved part of the boundary of $S$, $\Gamma=\{x\in \partial S:|x|=1\}$.
Consider $v\in W^{2,2}(S)$ with the partial boundary conditions
  \begin{equation}\label{eq:6}
  \left.\begin{split}
  v(x)&=1\\ \nabla v(x)&=x
\end{split}\right\}\text{ for }x\in\Gamma\,.
\end{equation}
Note that \eqref{eq:6} is satisfied by  $v_0(x):=|x|$. We set $2S=\{2x:x\in S\}$, and have that every function $v\in W^{2,2}(S)$ that satisfies \eqref{eq:6} may be extended to a function $Ev\in W^{2,2}(2S)$ by setting
\[
  Ev(x):=\begin{cases} v(x) &\text{ if } x\in S\\ |x| &\text{ else}.\end{cases}
    \]
Now we  define the negative Sobolev norm $\|f\|_{W^{-2,2}(2S)}$ by
\[
  \|f\|_{W^{-2,2}(2S)}=\sup\left\{ \int_{2S} \varphi f \d x: \varphi\in W^{2,2}_0(2S), \|\varphi\|_{W^{2,2}_0(2S)}\leq 1\right\}\,,
  \]
  where (as usual) $W^{2,2}_0(2S)$ denotes the closure of the set of functions $\varphi\in C^\infty_c(2S)$ with respect to the norm $\|\varphi\|_{W^{2,2}_0(2S)}=\left(\int_{2S}|\nabla^2\varphi|^2\d x\right)^{1/2}$. The space $W^{-2,2}(2S)$ may then be defined as  the completion of $C^{\infty}(2S)$ (say) with respect to the norm $\|\cdot\|_{W^{-2,2}(2S)}$.
  For $h>0$, we define the  variational functional
  \[
    J_h(v)= h^{-2}\|\det \nabla^2(Ev)\|_{W^{-2,2}(2S)}^2+\|\nabla^2 v\|^2_{L^2(S)}\,.
    \]
    We restrict the set of permissible functions to those that are \emph{convex},
    \[
      \mathcal A=\left\{v\in W^{2,2}(S), v\text{ is convex and satisfies }\eqref{eq:6}\right\}\,.
\]
Our main theorem is as follows:

  \begin{theorem}\label{thm:mainma}
There exist  numerical constants $h_0,C>0$ such that for every $h\in(0,h_0)$,
    \[
      2\alpha \log \frac{1}{h}-C\log\log\frac{1}{h}\leq \inf_{v\in \mathcal A} J_h(v)\leq 2\alpha \log\frac{1}{h}+C\,.
      \]
  \end{theorem}

  \begin{remark}
    \begin{itemize}
    \item[(i)] Let us restate the lower bound in Theorem \ref{thm:mainma} in a slightly different fashion, emphasizing its relation to  the Monge-Amp\`ere equation in two dimensions: Suppose that $v\in \A$ satisfies $\det\nabla^2 v=\mu$ on $S$, where $\mu$ is a Radon measure. Then
      \[
        h^{-2}\|\mu\|_{W^{-2,2}}^2+\|\nabla^2 v\|^2_{L^2}\geq 2\alpha \log h^{-1}-\text{ lower order terms}\,.
      \]
       We point out that  our estimate  does not require that the right hand side of the equation $\mu$ is bounded away from zero, as
is the case, e.g., for the well known estimates in regularity theory for the Monge-Amp\`ere equation \cite{caffarelli1990interior,de2013w}.
    \item[(ii)] Minimization of the  energy functional $J_h$ may be thought of as a variational relaxation of  the constraint $\det \nabla^2 v=0$. The choice to measure the discrepancy of the Hessian determinant from 0 in $W^{-2,2}$ is motivated by nonlinear elasticity, see Section \ref{sec:nonlinear-elasticity} below.
    \item[(iii)] The reason why we consider  this particular setup is the following: 
      The only solution $v\in W^{2,2}_{\mathrm{loc}}(S)$ of $\det\nabla^2v=0$ in $S$ with the boundary conditions \eqref{eq:6} is given by $v_0(x)=|x|$, which in turn is not an element of $W^{2,2}(S)$. One expects minimizers of $J_h$ to be close to $v_0$ except at points close to the origin (distance at most $h$), where some smoothing should occur. Indeed, 
an explicit
construction of a competitor based on this idea straightforwardly leads to
the upper bound  $2\alpha \log \frac{1}{h}+C$ (see the proof of Corollary \ref{cor:FvK}).
The function $v_0$ is not a good competitor because of the non-integrability of $|D^2v_0|^2$ near the origin. For the lower bound, it has to be understood how  removing the non-integrability affects the membrane energy  $\|\det\nabla^2(Eu)\|_{W^{-2,2}}$. The idea of how to quantify this interplay is sketched in  a non-technical manner in Section \ref{sec:idea}. We note that our arguments would apply to any other choice of domain and boundary conditions that  lead to the same type of non-integrability of the square of the second gradient for ``flat'' configurations (i.e., functions $v$ satisfying $\det \nabla^2 v=0$). We stick to the convex sector $S$ and the clamped boundary conditions \eqref{eq:6} for simplicity and definiteness. In particular, we consider only convex domains, in order to work with a self-evident notion of convex function.

    \end{itemize}
  \end{remark}

% A $C^2$ map  $\R^2\supset\Omega\to \R$ is called developable if $\det\nabla^2 u=0$. By a classical result, for every $x\in \Omega$, either $u$ is affine close to $x$ or there exists a line segment passing through $x$ such that $\nabla u$ is constant on this line segment. On the other hand, if $u$ is only required to be $C^1$, 

\subsection{Scientific context; motivation for the definition of $J_h$}
\subsubsection{Rigidity and $h$-principle} By a  classical 
 result,  any  homogeneous solution of the two-dimensional Monge-Amp\`ere
equation is \emph{rigid}. By this we mean the following  well-known dichotomy for solutions $v\in C^2(\Omega)$ of 
$\det \nabla^2 v=0$, where $\Omega\subset\R^2$: For  any $x\in \Omega$, either $\nabla v$ is constant in a
neighborhood of $x$, or there exists a line segment through $x$ ending on the
boundary of $\Omega$  on which $\nabla v$ is constant.\footnote{It has been 
shown by Korobkov \cite{korobkov2009properties} that the rigidity of $v\in C^2$ with $\det\nabla^2 v=0$ still holds for $C^1$ functions $v$ whose gradient
image $\nabla v(\Omega)$ is
essentially one-dimensional  (see also \cite{korobkov2007properties}). Imposing the boundary conditions \eqref{eq:6},  one expects the image of the gradient to
have small distance from a  one-dimensional curve. The present article
is partly inspired by Korobkov's work and his analysis  of the preimages $(\nabla
v)^{-1}(y)$ where $\nabla v(\Omega)$ is   one-dimensional; we will consider instead preimages $(\nabla v)^{-1}(A)$ of appropriately
chosen sets $A$ that are contained in the 
image $\nabla v(\Omega)$ which has small distance from a one-dimensional curve.}

\medskip

We will now stray into differential geometry,  considering embedded surfaces  $y(\Omega)\subset \R^3$  instead of graphs $\{(x,v(x)): x\in\Omega\}$, even though it is the latter that we are actually interested in. We do so because the results that we cite as motivation for our setup are  better known in the geometric setting. 

\medskip

In this differential geometric setting, the analogue of the rigidity from above   is the following. Suppose that  $M=y(\Omega)\subset\R^3$ is a  two-dimensional  $C^2$ embedded surface with vanishing Gauss curvature $K$. Then for  every point $x\in M$ either there exists a neighborhood in which $M$ is flat (vanishing second fundamental form) or there exists a straight line segment $L$ contained in $M$ whose endpoints are contained in the boundary of $M$.
As an aside, we note the well known fact that zero Gauss curvature is equivalent to the isometry of the embedding,  i.e.,
\[
  K=0 \text{ on }M \quad \Leftrightarrow \quad \nabla y^T(x)\nabla y(x)=\id_{2\times 2} \quad \forall \,x\in \Omega\,,
  \]
  and we see that this condition can be formulated without any problem also for embeddings of regularity $C^1$.

\medskip

When we do lower the regularity requirements and accept   $C^1$ isometric embeddings, the situation changes dramatically and all rigidity is lost. By a theorem by Nash \cite{MR0065993} and Kuiper \cite{MR0075640} there exists a $C^1$ isometric embedding  in every $C^0$-neighborhood of any \emph{short} embedding, where an embedding $y:\Omega\to \R^3$ is called short if $\nabla y^T\nabla y< \id_{2\times 2}$ in the sense of positive definite matrices. The Nash-Kuiper Theorem  is an instance of the so-called $h$-principle \cite{MR864505}, which -- using  somewhat imprecise language -- can   be thought of as the opposite  of rigidity.

\medskip

Obviously, the isometry condition  is  local, but rigidity is a statement about the global shape of the surface. These considerations  give rise to the following question: How large do second gradients of $y$ have to be  if
\begin{itemize}
\item[a)] $y$ is almost an isometry
\item[b)] $y$ is prevented from actually being an isometry  by the imposition of constraints on the  shape of its graph but such that
 \item[c)] these constraints do allow for  short embeddings?
  \end{itemize}
     The natural way to make this question rigourous is in a variational framework. For example, one may consider the variational functional
\begin{equation}
I_h(y)=\int_{\Omega} \left(|\nabla y^T\nabla y-\id_{2\times 2}|^2+h^2|\nabla^2y|^2\right)\d x\,,\label{eq:14}
\end{equation}
where $h>0$ is interpreted as a small parameter, and introduce constraints (such as boundary conditions or obstacles) on $y$ that prohibit smooth isometries while allowing for short embeddings. Then the -- very difficult -- task is to find  lower bounds for $I_h$ that are  close to optimal (optimal in in the leading order of $h$, say).
Without the introduction of further strong assumptions, it is not known how to treat such questions. Even if such assumptions are added, it is in general  still  a non-trivial task to carry out a quantitative analysis.

\subsubsection{Nonlinear elasticity} \label{sec:nonlinear-elasticity} Over the last decades there has been a lot of interest in the shape formation of thin elastic sheets subject to constraints and/or under the influence of external forces \cite{CCMM,MR2023444,1997PhRvE..55.1577L,RevModPhys.79.643,Cerda08032005}.   In the mathematical literature, such questions have predominantly been discussed in a variational framework by investigating  boundary value  and obstacle problems for functionals as in \eqref{eq:14} or similar ones, see e.g.~\cite{MR3179665,MR1921161,2015arXiv151207416B,kohn2013analysis,MR2358334}. The papers \cite{MR3102597,conti2017symmetry} treat scaling laws of approximately conical configurations.

These works mainly focus on  different constraints than the ones characterized by b) and c) above (with the notable exception of \cite{MR2358334}). 
Some progress on this   case has been obtained recently in
\cite{2015arXiv150907378O,olber2017coneconv,olber2019crump}, where  different simplifying assumptions or modifications of the integral functional have been made. These works are
based on the
observation  that the ``membrane term'' in the functional \eqref{eq:14},
  $\|\nabla y^T\nabla y-\id_{2\times 2}\|_{L^2}^2$,
can be thought of as a penalization of (suitably linearized) Gauss curvature in a negative Sobolev space.  The information
contained in the boundary values (or, in the case of \cite{2015arXiv150907378O,olber2017coneconv}, metric defects) has to be combined with the
smallness of the Gauss curvature in  the appropriate negative Sobolev space to
obtain a lower bound for the bending energy $h^2\|\nabla^2y\|_{L^2}^2$.
This approach is different to the one used e.g.~in the works \cite{MR3168627,MR3102597,MR2358334}, where ``tensile'' Dirichlet boundary conditions are being considered. In such a setting, any deviation from the configuration that satisfies the boundary conditions with vanishing membrane bending energy can be shown to cause  energetically costly stretching of the sheet. 

The present work can be viewed as a continuation of the program started in \cite{2015arXiv150907378O,olber2017coneconv,olber2019crump}. In particular, the relation between membrane term and negative Sobolev norms motivates  our choice of the leading order term $\|\det\nabla^2 v\|_{W^{-2,2}}^2$ in the definition of $J_h$ above. For a more explicit statement of the relation between these two objects, see the proof of Corollary \ref{cor:FvK} below.

% Here we will consider  a thin elastic sheet in
% the shape of a sector with clamped  boundary conditions on the curved part of
% the boundary. The  rest of the boundary is left free.

\medskip

\subsection{A corollary for the F\"oppl-von K\'arm\'an energy}
Instead of the functional
\eqref{eq:14}, a popular choice of variational functional for modeling thin elastic sheets is the so called  F\"oppl-von K\'arm\'an energy, for the definition of which one decomposes the deformation into
an in-plane component $u:\Omega\to \R^2$, and an out-of-plane component
$v:\Omega\to\R$,  $y(x)=x+(u(x),v(x))$.
For $h>0$, $u\in W^{1,2}(S;\R^2)$, and  $v\in W^{2,2}(S)$,  the F\"oppl-von K\'arm\'an energy is given by
  \begin{equation}\label{eq:3}
E_h(u,v)=\int_S \frac14|\nabla u+\nabla u^T+\nabla v\otimes\nabla v|^2+h^2|\nabla^2 v|^2\d x\,.
\end{equation}
One observes that
the components are treated
non-symmetrically in the energy. The F\"oppl-von K\'arm\'an energy can be derived heuristically for small deformations from the geometrically fully nonlinear energy \eqref{eq:14} and can be deduced rigorously from three-dimensional finite elasticity \cite{MR2210909}.

\medskip

The dichotomy between rigidity and $h$-principle  can be established also in this approximation, see \cite{lewicka2017convex}. An ``isometry'' has now to be understood as a pair $(u,v)$ that satisfies $\nabla u+\nabla u^T+\nabla v\otimes\nabla v=0$, and a ``short map'' with respect to some given symmetric matrix-valued function $A:\Omega\to \{M\in\R^{2\times 2}:M^T=M\}$ has to be understood as a pair $(u,v)$ satisfying $\nabla u+\nabla u^T+\nabla v\otimes\nabla v\leq A$ in the sense of positive definite matrices.  Again it is not known in the general case how to  establish optimal lower bounds for constraints that satisfy the conditions b) and c) above.

\medskip

In the present paper, we show how under the assumption that $v$ is convex, such an optimal bound may be proved. Of course the Nash-Kuiper-like oscillatory deformations are a priori eliminated by this assumption, and the reader might wonder what the interest is in such a result once this major difficulty is removed. The answer to that question is that even under the assumption of convexity it is still a non-trivial task to establish \emph{quantitative} results on the variational competition between membrane and bending terms (or, in the case of $J_h$, between the Hessian determinant and the full Hessian).  

\medskip

For pairs $(u,v)$ that represent in-plane and out-of-plane deformations, the appropriate boundary conditions are
         \begin{equation}
       \label{eq:7}\left.\begin{split}
       v(x)&=1 \\\nabla v(x)&= x\\
u(x)&=       \frac12 \left( \arg (x) x^\bot-x\right)\,\,
\end{split}\right\}\text{ for } x\in \Gamma\,,
\end{equation}
where $x^\bot=(-x_2,x_1)$, and $\arg: B(0,1)\setminus \{(x_1,0):x_1\leq 0\}\to(-\pi,\pi)$ is the  function that maps $r(\cos\varphi,\sin\varphi)$ to $\varphi$ for $r\in (0,1)$, $\varphi\in (-\pi,\pi)$. Note that the boundary conditions \eqref{eq:7} are precisely those of the  deformation $(u_0,v_0)$ given by $u_0=\frac12\left(\arg(x) x^\bot-x\right)$, $v_0(x)= |x|$. Also note that  $\nabla u_0+\nabla u_0^T+\nabla v_0\otimes\nabla v_0=0$, but $(u_0,v_0)$ is not permissible since $v_0\not\in W^{2,2}(S)$ (as already observed in the remarks after Theorem \ref{thm:mainma}). See Figures \ref{fig:domain}, \ref{fig:cone}.

\begin{figure}[h]
% \begin{subfigure}{.45\textwidth}
  \begin{center}
    \includegraphics[height=5cm]{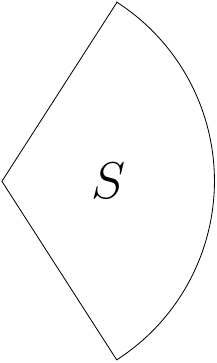}
  \end{center}
  \caption{The elastic sheets in its reference configuration.\label{fig:domain}}
  \end{figure}
%\end{subfigure}
% \hspace{5mm}
% \begin{subfigure}{.45\textwidth}
\begin{figure}[h]
\includegraphics[height=5cm]{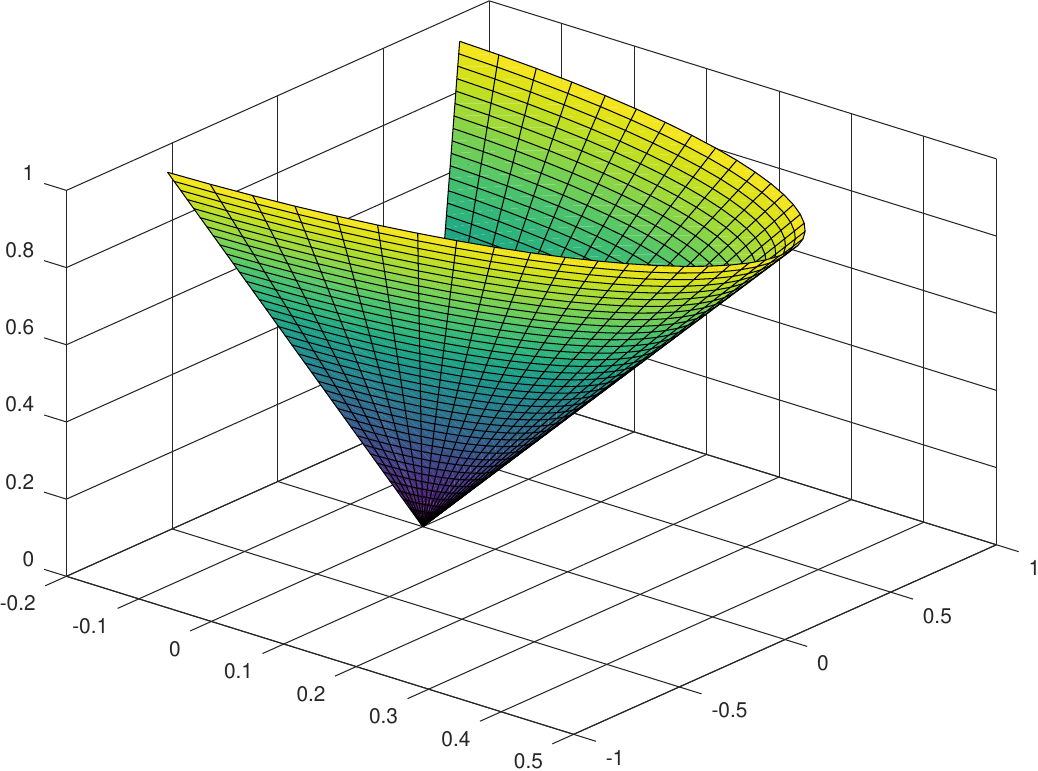}
\caption{The conically deformed sheet, satisfying the right boundary conditions
 with zero membrane and infinite bending energy. \label{fig:cone}} 
%\end{subfigure}
\end{figure}

The  set of permissible deformations is now given by
\newcommand{\ANE}{\mathcal A_{\mathrm{FvK}}}

       \[
         \ANE:=\left\{(u,v)\in W^{1,2}(S;\R^2)\times W^{2,2}(S): v \text{ convex }, (u,v) \text{ satisfy }\eqref{eq:7}\right\}\,.
         \]

% Thanks to assumption \eqref{eq:2}, $\nabla v$ is a monotone function.       

\bigskip       

As a corollary of Theorem \ref{thm:mainma}, we have:
 \begin{corollary}
   \label{cor:FvK}
         There exist  numerical constants $C,h_0>0$ such that for $h\in(0,h_0)$, we have that
         \[
           2\alpha h^2\log\frac{1}{h}-Ch^2\log\log\frac{1}{h}\leq
           \min_{(u,v)\in\ANE} E_h(u,v)\leq 2\alpha h^2\left(\log\frac{1}{h}+C\right).
           \]
       \end{corollary}

      %  This can also be formulated as a theorem for the Monge-Amp\`ere equation;
%        here identify a Radon measure $\mu\in \mathcal M(S)$ with its trivial extension to $2S$ and write
%        \[
%          \|\mu\|_{W^{-2,2}(2S)}=\sup\left\{\int_{2S}\Phi\d\mu:\Phi\in C_c^\infty(2S),\|\nabla^2 \Phi\|_{L^2}\leq 1\right\}\,.
%        \]
%        In the statement of the corollary, it is understood that $\|\nabla^2
%        f\|_{L^2}:=\infty$ for any convex function $f$ whose
%        distributional second derivative is not in $L^2$.
%        \begin{corollary}
% \label{cor:monge-ampere}       Let $v$ have the boundary conditions from above, and
%        \[
%          \det\nabla^2 v=\mu \quad \text{ on } S
%        \]
%        where $\mu$ is some (non-negative) measure. Then for every $\e$ small enough, we have 
%        \[
%          \e^{-2}\|\mu\|^2_{W^{-2,2}(2S)}+\|\nabla^2 v\|_{L^2}^2\geq 2\alpha \log \frac{1}{\e}-C\log\log\frac{1}{\e}\,.
%        \]
%      \end{corollary}
    
%        \begin{remark}
        
%        \end{remark}
\newcommand{\curl}{\mathrm{curl}\,}
       
\begin{proof} First we show the upper bound, which does not follow from Theorem \ref{thm:mainma}, but whose proof is rather straightforward.
           Set $u(x) = \frac12(\arg (x) x^\bot - x)$ and, for every $h>0$, set
         \[
         v_h(x) = \begin{cases}
                    |x| &\text{, if }|x|\geq h\\
                    \frac{|x|^2}{2h} + \frac{h}{2}&\text{, if }|x|<h.
                  \end{cases}
         \]
         We see that $\nabla u(x) + \nabla u^T(x) = -\frac{x\otimes x}{|x|^2}$,
         \[
         \nabla v_h(x) = \begin{cases}
                         \frac{x}{|x|}&\text{, if }|x|\geq h\\
                         \frac{x}{h}&\text{, if }|x|<h
                       \end{cases},
         \]
         and 
         \[
         \nabla^2 v_h(x) = \begin{cases}
                         \frac{x^\bot \otimes x^\bot}{|x|^3}&\text{, if }|x|\geq h\\
                         \frac{\id}{h}&\text{, if }|x|<h
                       \end{cases}.
         \]
         
         In particular, the boundary values are satisfied. We find
         \begin{equation}
         \int_S |\nabla u + \nabla u^T + \nabla v_h \otimes \nabla v_h|^2 \d x \leq \int_0^h \int_{-\alpha}^\alpha r \d\alpha\d r = 2\alpha h^2.
         \end{equation}
         
         On the other hand, the bending energy can be calculated precisely:
         \begin{equation}\label{eq: FvK hessian}
         h^2 \int_S |\nabla^2 v_h|^2\d x = 2\alpha h^2 \left( \log \frac{1}{h} + 2\right).
         \end{equation}
         
         This shows the upper bound.

         \medskip

         For the lower bound, 
      we observe that we have the distributional identity
  \begin{equation}\label{eq: curl curl}
    \det\nabla^2 v=-\frac12\curl\curl \left(\nabla u^T+\nabla u+\nabla v\otimes
    \nabla v\right)\,,
  \end{equation}
  where $\curl (w_1,w_2)=\partial_1 w_2-\partial_2 w_1$, and one of the curls is taken  column-wise, the other one row-wise.
  Thus we may 
  integrate by parts twice in the definition of $\|\det\nabla^2 Ev\|_{W^{-2,2}}$, and obtain
  
  \begin{equation}
    \begin{split}\label{eq:10}
      \|\det \nabla^2 (Ev)\|_{W^{-2,2}(2S)}&= \sup\left\{\int_{2S}\tilde\Phi\det \nabla^2
        (Ev)\d x:\,\tilde \Phi\in C_c^\infty(2S),\|\nabla^2
        \tilde \Phi\|_{L^2}\leq 1\right\}\\
      &=\sup\Big\{\int_{2S}\frac12\mathrm{Tr}\left[\left(\nabla u^T+\nabla u+\nabla (Ev)\otimes
          \nabla (Ev)\right)\cof \nabla^2\tilde \Phi\right]\d x:\\
        &\quad\tilde \Phi\in
      C_c^\infty(2S),\|\nabla^2
      \tilde \Phi\|_{L^2}\leq 1\Big\}\\
    &\leq\frac12 \|\nabla u+\nabla u^T+\nabla v\otimes\nabla v\|_{L^2(S)}\,,
  \end{split}
\end{equation}

where ``$\cof$'' denotes the cofactor matrix.
  Hence we obtain using Theorem \ref{thm:mainma} 
  \[
    \begin{split}
      E_{h}(u,v)&\geq  \|\det\nabla^2 Ev\|_{W^{-2,2}}^2+h^2\|\nabla^2 v\|_{L^2}^2\\
      &=h^2 J_{h}(v)\\
    &\geq  
 h^2 \left(2\alpha\log \frac1h-C\left(\log\log\frac1h\right)\right)\,,
\end{split}
\]
where the last inequality holds for $h_0,C$ chosen small and large enough respectively. This proves our claim.
\end{proof}

\begin{remark}
  It was in order to be able to carry out the calculation for the lower bound \eqref{eq:10} that we defined the energy in terms of the extension $Ev$, and on the enlarged domain $2S$. It is obvious that an analogous upper bound for the membrane energy in terms of $\|\det\nabla^2(Ev)\|_{W^{-2,2}}$ cannot exist, since bad choices of the in-plane-deformation $u$ can make it arbitrarily large, while they do clearly not modify $\|\det \nabla^2 (Ev)\|_{W^{-2,2}}$.
\end{remark}

\subsection{Sketch of proof of Theorem \ref{thm:mainma}}
\label{sec:idea}
The idea for the proof of the lower bound in  Theorem \ref{thm:mainma} is the following:
We identify $v$ with its extension to $2S$, where $v(x)=|x|$ for $x\in 2S\setminus S$.
We need a judicious choice of $\tilde \Phi$ to bound from below the negative Sobolev norm by
  \begin{equation}\label{eq:5}
  \int_{2S}\tilde\Phi\det \nabla^2
  v\d x\,,
\end{equation}
where  $\tilde \Phi\in C_c^\infty(2S)$ with $\|\nabla^2
        \tilde \Phi\|_{L^2}\leq 1$. We will see that up to controlled factors
        and away from the boundary,
        such a choice is given by $\tilde\Phi(x)=|x|$. Let $\tilde S\subset 2S$ be defined by $\tilde S=\{r(\cos\varphi,\sin\varphi):\varphi\in I\subset(-\alpha,\alpha),r\in(\e,1]\}$ with $\e\gg h$, such that $\tilde\Phi|_{\tilde S}$ is large in comparison to $h$. If the (Lebesgue) measure of $\nabla v(\tilde S)$ is large compared to
        $h$, then the integral \eqref{eq:5} is large too. Since $\nabla
        v(\tilde S)$ contains
        $\nabla v(\tilde S\cap \Gamma)=\tilde S\cap \Gamma$ by the boundary conditions, we conclude that $\nabla
        v(\tilde S)$ needs to be close to the one dimensional curve $\tilde S\cap \Gamma$, but of small measure, see Figure \ref{fig:gradv}. Now consider the
        preimages of angles under $\nabla v$, i.e., for $\varphi\in I$, the sets
        \[l_\varphi:=(\nabla
          v)^{-1}(\{r e_\varphi:r\in(0,\infty)\})\cap \tilde S\,,
        \]
        where $e_\varphi:=(\cos\varphi,\sin\varphi)$.

\begin{figure}[h]
%\begin{subfigure}{.45\textwidth}
  \begin{center}
    \includegraphics[height=5cm]{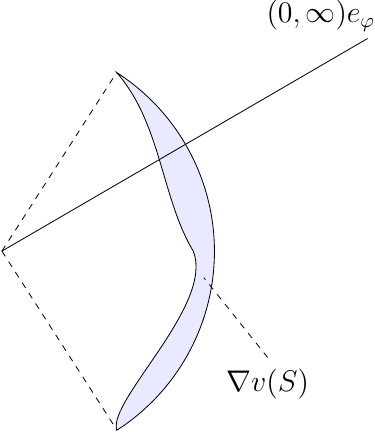}
  \end{center}
  \caption{The approximately one-dimensional gradient image $\nabla v(S)$.\label{fig:gradv}}
  \end{figure}
% \end{subfigure}
% \hspace{5mm}
% \begin{subfigure}{.45\textwidth}
\begin{figure}[h]  \begin{center}
\includegraphics[height=5cm]{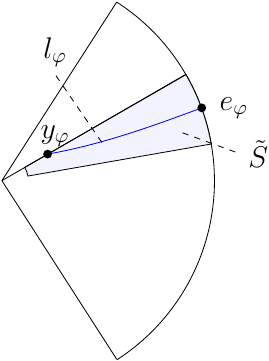}
\caption{The preimage of $(0,\infty)e_\varphi$ in $\tilde S$ under $\nabla v$ contains one connected component that starts at $e_\varphi$ and exits the domain  at $y_\varphi$. \label{fig:lphi}} 
\end{center}
%\end{subfigure}
\end{figure}

By the smallness of the measure of $\nabla v(\tilde S)$,  $|\nabla v|$ should be close to 1 on $l_\varphi$ for most
        $\varphi$.  We note
        that, again by the boundary conditions,  $l_\varphi$ that can be thought of as a curve starting in
        $e_\varphi$ and exiting $\tilde S$ either at one of the lateral boundaries (this case is sketched in Figure \ref{fig:lphi}) or at  $\partial\tilde S\cap\{|x|=\e\}$. 
\begin{figure}
  \begin{center}
    \includegraphics[height=5cm]{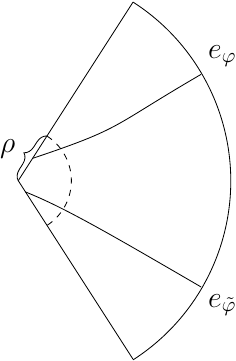}
\caption{If there are two preimages of angles $l_\varphi,l_{\tilde\varphi}$ such that a) they do not deviate much from the rays connecting the origin with $e_\varphi,e_{\tilde\varphi}$ respectively for radii larger than $\rho$ and b) $|\nabla v|$ is close to 1 on these curves, then the bending energy density $|\nabla^2 v|^2$ at radius $\rho$ is bounded from below, since $\nabla v\approx e_\varphi$ on $l_\varphi$ and $\nabla v\approx e_{\tilde\varphi}$ on $l_{\tilde\varphi}$.\label{fig:conc}}
  \end{center}
\end{figure}

In the first case, i.e., if the exit point from $\tilde S$ (which we call $y_\varphi$, see again Figure \ref{fig:lphi}) is at a distance larger than $\e$ from the origin, $|\nabla v|$ must have decreased along $l_\varphi$ by an amount of order $\e$. (This is the core of the argument and will be proved in Lemma \ref{lem:deficitestim} below; the convexity of $v$ is crucial for this step.) % we may explicitly estimate the
      % contribution of $l_\varphi$ to the integral \eqref{eq:5} by a change of variables.
      This observation can be translated into a lower bound of the integral of $\det\nabla^2 v$ over  $l_\varphi$.
      % \footnote{It is in particular this calculation where the convexity assumption is used.} (up to controlled factors)  of the order $\e$, which is large in comparison to $h$.
      There cannot be many such angles $\varphi$, otherwise the value of the integral \eqref{eq:5} becomes too large. Thus for most $\varphi$ we must have $|y_\varphi|=\e$. This implies that there is a large amount of bending energy density at every radius $\rho>\e$, see Figure \ref{fig:conc}. A judicious choice of $\e$ yields the desired lower bound.

    % \footnote{We did not try to find the optimal
      % power of the logarithm, which would translate to an optimal constant $C$
      % in the lower bound of our main theorem. Smaller powers than 6 would be enough; we did not
      % try to improve on this for the sake of brevity.}
      % $h\left(\log \frac1h\right)^6$,

\renewcommand{\e}{{h^*}}

\subsection*{Notation}
     We set
       \[
         \beta \coloneqq \left(\log \frac1h\right)^{-1}\,,\qquad \e \coloneqq h\left(\log\frac1h\right)^{6}\,,
         \]
         % where $\lceil s\rceil$ denotes the smallest integer larger than  $s\in\R$.
         We work with the reduced `sector'
         \[
             S_h = \{x = re_\varphi \in S\,:\, \varphi \in
             (-\alpha+\beta,\alpha-\beta), r \geq \e\}
         \] that is 
         missing the tip and two thin outer sectors. For $M\subset\R^2$, we write  $2M=\{2x:x\in M\}$. Apart from the function
         $\arg: B(0,1)\setminus \{(x_1,0):x_1\leq 0\}\to (-\pi,\pi)$ introduced
         above, we will also use the function
         $\arg_{S^1}:B(0,1)\setminus\{0\}\to S^1$ defined by $x\mapsto
         x/|x|$. Subsets of $(-\pi,\pi)$ will be identified with their image in
         $S^1$ under the map $\varphi\mapsto e_\varphi:=(\cos\varphi,\sin\varphi)$.

% For $u\in W^{1,2}(U;\R^2)$ with $U\subset S$, the symmetric gradient will be denoted by $eu=\frac12(\nabla u+\nabla u^T)$.         
  The
symbol ``$C$'' is used as follows: A statement such as ``$f\leq Cg$'' is shorthand for
``there exists a constant $C>0$ that only depends on $\alpha$ such that $f\leq
Cg$''. The value of $C$ may change within the same line. 
For $f\leq Cg$, we also write $f\lesssim g$.
The symbol $\H^d$ denotes the $d$-dimensional Hausdorff measure, while $\L^d$ is
the $d$-dimensional Lebesgue measure.

\section{Lower bound: Preliminaries.}

\subsection{Smooth approximation}

Here we show that we can assume $v\in \mathcal{A}$ to be smooth and strictly convex.

\begin{lemma}\label{lemma: smooth}
Let $h>0$, $\alpha \in (0,\pi/2)$.
\begin{itemize}
\item [(i)] Whenever $v,\tilde v\in \mathcal{A}$ then
\begin{equation}\label{eq: determinant continuity}
\|\det \nabla^2 Ev - \det \nabla^2 E\tilde v\|_{W^{-2,2}(2S)} \lesssim \|\nabla^2 v - \nabla^2 \tilde v\|_{L^2(S)} \|\nabla^2 v + \nabla^2 \tilde v\|_{L^2(S)}.
\end{equation}

\item [(ii)] Let $v\in \mathcal{A}$, $\delta>0$. Then there exists a strictly convex and smooth $v_\delta\in \mathcal{A}$ with
\begin{equation}\label{eq: smooth energy}
\|v-v_\delta\|_{W^{2,2}(S)} \leq \delta , \qquad J_h(v_\delta) \leq J_h(v) + \delta.
\end{equation}
\end{itemize}
\end{lemma}

%\begin{lemma}\label{lemma: smooth}
%Let $h>0$, $\alpha\in (0,\pi/2)$ and  $(u,v)\in \mathcal{A}$. Then for any compact $K \Subset 2S$ and any $\delta > 0$ there is a strictly convex $v_\delta\in C^\infty(K)$ such that $\|v_\delta - Ev\|_{L^\infty(K)} < \delta$, $\nabla v_\delta = x$ on $K \cap \partial B(0,1)$, and
%\begin{equation}\label{eq: smooth energy}
%h^{-2} \|\det \nabla^2 v_\delta\|_{W^{-2,2}(K)}^2 + \|\nabla^2 v_\delta\|_{L^2(K\cap S)}^2 \leq J_h(v) + \delta.
%\end{equation}
%\end{lemma}

\begin{proof}

To show (i), we use the identity \eqref{eq: curl curl}, which holds distributionally for $u\in W^{2,2}$. Fix a test function $\phi\in C_c^\infty(2S)$ and integrate by parts
\[
\begin{aligned}
\int_{2S} (\det \nabla^2 Ev - \det \nabla^2  E\tilde v)\phi \d x &= -\frac12 \int_{2S} \underbrace{(\nabla Ev \otimes \nabla E v - \nabla E\tilde v \otimes \nabla E\tilde v)}_{=0\text{ on } 2S\setminus S}:\cof \nabla^2 \phi \d x\\
&\leq  \|\nabla v - \nabla \tilde v\|_{L^4(S)} \|\nabla v + \nabla \tilde v\|_{L^4(S)} \|\nabla^2\phi\|_{L^2(S)}. 
\end{aligned}
\] 
Now we use the Sobolev-Poincar\'e inequality $\|\nabla w\|_{L^4(S)} \lesssim \|\nabla^2 w\|_{L^2(S)} + \|T \nabla w\|_{L^4(\Gamma)}$, where ``$T$'' denotes the trace operator. We observe $\|\nabla v-\nabla \tilde v\|_{L^4(\Gamma)} = 0$ and $\|\nabla v+ \nabla \tilde v\|_{L^4(\Gamma)} = O(1)\lesssim \|\nabla^2 v + \nabla^2 \tilde v\|_{L^2(S)}$, courtesy of the boundary values \eqref{eq:6}. Taking the supremum over all $\phi$ with $\int_{2S}|\nabla^2 \phi|^2 \d x \leq 1$ yields \eqref{eq: determinant continuity}.

Now to show (ii), we first extend $v$ to $V:2S \to \R$ by
\[
V(x) := \begin{cases}
v(x)  &x\in S\\
\frac{|x|^2+1}{2}&\text{otherwise.}
\end{cases}
\] 

We note that since $v\in \mathcal{A}$, $V$ is convex. We introduce a small parameter $\rho>0$ and define a function $v_\rho:S\to \R$ by
\[
v_\rho(x):= (V\ast \eta_\rho)(x+ a_\rho e_1).
\]

Here we convolve $V$ with $\eta_\rho(x):= \rho^{-d} \eta(x/\rho)$, where $\eta\in C_c^\infty(B(0,1))$ is a symmetric standard mollifier, and $a_\rho>0$ is the smallest number $a>0$ such that
\[\dist(S+ae_1, \partial S\setminus \Gamma) \geq \rho\quad\text{  and }\quad\dist(\Gamma+ae_1, S) \geq \rho\,.
\]
A valid choice for $a$ is given e.g.~by $a_\rho=\max(\rho/\sin\alpha,2 \sqrt{\rho})$. (The first number ensures the first condition, and the second number the second condition, even in the worst case $\alpha = \pi/2$.) The choice of $a_\rho$ guarantees that $v_\rho$ is well-defined, convex and smooth in $S$.

Since $\nabla v_\rho \neq x$ on $\Gamma$, we need to modify $v_\rho$ one more time to enforce the boundary values. We observe that
\[
(|\cdot|^2 \ast \eta_\rho)(x) = |x|^2 + \int_{B(0,\rho)} |y|^2 \eta_\rho(y)\d y = |x|^2 + b \rho^2,
\]
 where  $b>0$ depends only on  $\eta$. Now
 for $|x| \geq 1$ we have $B(x+a_\rho e_1,\rho) \cap S = \emptyset$ and hence
\[
\begin{aligned}
  v_\rho(x) &=  (V\ast \eta_\rho)(x+a_\rho e_1)\\
  &= \left(\frac{1+|\cdot|^2}{2} \ast \eta_\rho \right)(x+a_\rho e_1)\\
  &= \frac{1+ b \rho^2 + |\cdot|^2}{2}(x+a_\rho e_1)\\
 &=  \frac{1+ b\rho^2 + |x|^2 + 2a_\rho x_1 + a_\rho^2}{2},
\end{aligned}
\]
and
\[
\nabla v_\rho(x) = x+a_\rho e_1.
\]
 We thus set
\[
w_\rho(x):= v_{\rho}(x) - a_\rho x_1 - \frac{b \rho^2 + a_\rho^2}{2}, 
\]
and see immediately that $w_\rho\in \mathcal{A}$. 

It is straightforward to check that $w_\rho \to v$ in $W^{2,2}(S)$ as $\rho \to 0$. By \eqref{eq: determinant continuity}, we then have $J_h(w_\rho) \to J_h(v)$. 

Note that $w_\rho$ is in general not strictly convex, but we may replace it by $(1-\rho)w_\rho + \rho\frac{1+|\cdot|^2}{2}$, which is. It is trivial to check that all the above estimates still hold.

The result is obtained by picking $\rho = \rho(\delta)$ small enough.
\end{proof}

\begin{definition}
  Whenever $v\in \mathcal{A}$ such that $v$ is
  smooth and strictly convex on $S$, we write $v\in \mathcal A^*$.
\end{definition}

\subsection{Curves of constant slope}
In this subsection,  we assume that $v\in \mathcal A^*$ is fixed.

 For $\varphi\in (-\alpha + \beta, \alpha - \beta) $, let
 \[
   \begin{split}
     S_{h,\varphi}&=\{r(\cos\tilde \varphi,\sin\tilde\varphi):\e<r<1,\,\varphi-\beta<\tilde\varphi<\varphi+\beta\}\\
     \bar L_\varphi&=\nabla v^{-1}\left(\{re_\varphi:r\in(0,\infty)\}\right)\cap
     S_{h,\varphi}
   \end{split}
 \]

 \begin{lemma}
   \label{lem:Lstruc}
For every $\varphi\in (-\alpha+2\beta,\alpha-2\beta)$, $\bar L_\varphi$ is a countable  union of mutually disjoint smooth curves ending either on $\partial S_{h,\varphi}$ or in the critical point of $v$.
 \end{lemma}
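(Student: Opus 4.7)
The plan is to exploit the fact that, since $v \in C^\infty(S_h^1)$ is strictly convex, the gradient map $F := \nabla v : S_h^1 \to \R^2$ is smooth and injective; by invariance of domain it is a homeomorphism onto its (open) image, and the restriction $F|_{S_{h,\varphi}}$ is likewise a homeomorphism onto an open subset of $\R^2$. The set $\bar L_\varphi$ is then the preimage under this homeomorphism of the intersection $U := F(S_{h,\varphi}) \cap L_\varphi$, where $L_\varphi := (0,\infty) e_\varphi$.

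First I would observe that $U$ is relatively open in the one-dimensional half-line $L_\varphi$ and therefore a countable, mutually disjoint union of open intervals $U_j = (a_j,b_j) e_\varphi$ with $0 \le a_j < b_j \le \infty$. Pulling back through the homeomorphism yields the desired countable disjoint decomposition $\bar L_\varphi = \bigsqcup_j \gamma_j$, where $\gamma_j := F^{-1}(U_j)$ is a continuous injective image of an open interval. For smoothness, I would introduce the auxiliary function $h_\varphi(x) := F(x) \cdot e_\varphi^\bot$, which vanishes on $\bar L_\varphi$; wherever $\nabla h_\varphi(x_0) = \nabla^2 v(x_0) e_\varphi^\bot \neq 0$, the implicit function theorem supplies a smooth local parametrisation of $\{h_\varphi = 0\}$ that must agree with $\gamma_j$. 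For the endpoint analysis, I would parametrise $\gamma_j$ by $r \in (a_j,b_j)$ via $r \mapsto F^{-1}(r e_\varphi)$ and take the limits $r \to a_j^+$ and $r \to b_j^-$: either the image leaves every compact subset of $S_{h,\varphi}$ and accumulates on $\partial S_{h,\varphi}$, or, when $a_j = 0$, it converges by continuity of $F^{-1}$ to the unique $x^*$ with $F(x^*) = 0$, i.e., the critical point of $v$.

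The main obstacle will be establishing smoothness of $\gamma_j$ at points where $\nabla h_\varphi$ vanishes, since the implicit function theorem no longer applies. Positive semidefiniteness of $\nabla^2 v$ forces $e_\varphi^\bot \in \ker \nabla^2 v(x_0)$ at such a point, equivalently $\det \nabla^2 v(x_0) = 0$ and $\nabla^2 v(x_0) = \mu\, e_\varphi \otimes e_\varphi$ for some $\mu \ge 0$. I would use strict convexity to rule out topological pathologies of the level set $\{h_\varphi = 0\}$: two distinct smooth branches meeting at $x_0$ would map under $F$ into the single ray $L_\varphi$, and so by the injectivity of $F$ would have to coincide; likewise $\{h_\varphi = 0\}$ cannot contain any open set, as otherwise $\nabla v$ would be parallel to $e_\varphi$ throughout, forcing (by symmetry of $\nabla^2 v$) $\nabla v$ to be constant in the $e_\varphi^\bot$ direction, again contradicting injectivity. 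Combined with the smooth structure away from degenerate points, this should yield that each $\gamma_j$ is a smooth arc whose endpoints lie on $\partial S_{h,\varphi}$ or at the critical point of $v$, completing the proof.
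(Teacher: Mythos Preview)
Your proposal is correct in outline but takes a substantially longer route than the paper, and the part you flag as ``the main obstacle'' is in fact a non-issue. The paper's proof is two lines: away from the critical point the map $x\mapsto\arg_{S^1}(\nabla v(x))$ is smooth, and by strict convexity every $\varphi\in S^1$ is a regular value, so the preimage is a smooth $1$-manifold by the implicit function theorem. The point you are missing is that in this context ``strictly convex'' effectively means $\nabla^2 v>0$ everywhere: the approximants in Lemma~\ref{lemma: smooth} are built as a mollified convex function plus $c(\rho)\tfrac{|x|^2}{2}$ with $c(\rho)>0$, so the Hessian is uniformly positive definite. Hence $\nabla h_\varphi(x)=\nabla^2 v(x)\,e_\varphi^\bot$ never vanishes, and the degenerate case you spend the second paragraph on simply does not occur.

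Your route via invariance of domain does buy something: pulling back the open set $F(S_{h,\varphi})\cap(0,\infty)e_\varphi$ immediately gives the decomposition into countably many arcs and rules out closed loops without a separate argument, and it gives a clean parametrisation for the endpoint analysis. But the price is the detour through the degenerate-Hessian case, and your proposed resolution there is not actually a proof: knowing that the level set is a topological arc (from injectivity of $\nabla v$) does not by itself force smoothness at a point where $\nabla h_\varphi=0$; one can imagine a $C^0$ arc with a corner or a cusp sitting inside $\{h_\varphi=0\}$ without any of the topological obstructions you list being violated. So if you wanted to keep your argument self-contained under the weaker hypothesis (strict convexity without positive definiteness), that step would need genuine additional work. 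In the setting of the paper it is simply unnecessary.
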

 \begin{proof}
   % Away from the critical point of $v$, the function $x \mapsto \arg_{S^1}(\nabla v(x))$ is $C^\infty$.  By the strict convexity, all $\varphi\in S^1$ are regular values, and their preimages are $1$-dimensional smooth manifolds by the implicit function theorem.
   This is an immediate consequence of the fact that the gradient of a strictly convex smooth map is a smooth diffeomorphism.
 \end{proof}

\begin{definition}
  \label{def:Lvarphi}
We denote by $L_\varphi$ the connected component of $\bar L_\varphi$ containing $e_\varphi$. By  Lemma \ref{lem:Lstruc}, $L_\varphi$ is a smooth curve. One of its endpoints is $e_\varphi$; the other endpoint will be denoted by $y_\varphi$, which lies either on $\partial S_{h,\varphi}$ or in the critical point of $v$.
\end{definition}

\begin{definition}
  An angle $\varphi\in (-\alpha+\beta,\alpha-\beta)$ is called good if
  \[
    |y_\varphi|=\e\qquad \text{ and }\qquad |\nabla v(y_\varphi)|\geq 1-\beta\,.
    \]
  Otherwise, $\varphi$ is called bad. Let $\mathcal B$ denote the set of bad angles.
  \end{definition}

  % \begin{remark}
  %   \label{rem:mono}
  %   The function $\nabla v: S\to \R^2$ is  monotone  by the convexity of $v$, i.e., $A\subset B\subset S$ if and only if $\nabla v(A)\subset \nabla v(B)$.

  % \end{remark}
\subsection{Definition of a suitable test function for the membrane term}
\label{sec:defin-suit-test}
Let $\eta_1\in C_c^\infty((h,2))$ with $\eta_1\geq 0$ and $\eta_1=1$ on
$(\e,\frac32)$, and $|\eta_1'|\lesssim (\e)^{-1}$,
$|\eta_1''|\lesssim(\e)^{-2}$. Let $\eta_2\in
C_c^\infty(-\alpha,\alpha)$ with $\eta_2\geq 0$,  $\eta_2=1$ on
$(-\alpha+ \beta,\alpha-\beta)$, and $|\eta_2'|\lesssim\beta^{-1}$, $|\eta_2''|\lesssim\beta^{-2}$. Set
\[
  \eta(x)=\eta_1(|x|)\eta_2\left(\arg(x)\right)\,,
\]
and
\[
   \Phi(x)=\eta(x) |x|\,.
\]

\begin{lemma}\label{lemma: test function}
  We have that $\Phi \in C_c^\infty(2S)$, $\Phi(x) = |x|$ in $S_h$, and
  \[
    \|\Phi\|_{W^{2,2}(2S)}\lesssim \left(\log\frac1h\right)^2\,.\]
\end{lemma}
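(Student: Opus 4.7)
The plan is to work in polar coordinates $(r,\varphi)$, exploit the separable product structure $\Phi(x)=r\,\eta_1(r)\,\eta_2(\varphi)$, compute the components of the Hessian in the orthonormal polar frame $\{\hat r,\hat\varphi\}$, and integrate term by term against the area element $r\,dr\,d\varphi$ to identify which cut-off scale saturates the bound. The three structural claims are immediate from the definitions: since $\eta_1$ is compactly supported in $(h,2)$ and $\eta_2$ is compactly supported in $(-\alpha+\beta,\alpha-\beta)$, the support of $\Phi$ is a compact subset of
\[
\{re_\varphi:h<r<2,\ -\alpha+\beta<\varphi<\alpha-\beta\}\subset S_h^1\cup 2S_h^1,
\]
so $\Phi\in C_c^\infty(S_h^1\cup 2S_h^1)$ and extends by zero to $2S$; on $S_h^2$ one has $r\ge \eps$ and $\varphi\in(-\alpha+2\beta,\alpha-2\beta)$, so $\eta_1(r)=\eta_2(\varphi)=1$ and hence $\Phi(x)=|x|$.

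For the Sobolev bound, a direct computation in the polar frame gives
\[
H_{rr}=(r\eta_1''+2\eta_1')\eta_2,\qquad H_{r\varphi}=\eta_1'\,\eta_2',\qquad H_{\varphi\varphi}=\tfrac{\eta_1}{r}\eta_2''+\bigl(\tfrac{\eta_1}{r}+\eta_1'\bigr)\eta_2,
\]
and $|\nabla^2\Phi|^2=H_{rr}^2+2H_{r\varphi}^2+H_{\varphi\varphi}^2$. I would bound each resulting summand using the pointwise hypotheses $|\eta_1'|\lesssim\eps^{-1}$, $|\eta_1''|\lesssim\eps^{-2}$, $|\eta_2'|\lesssim\beta^{-1}$, $|\eta_2''|\lesssim\beta^{-2}$, while keeping track that $\eta_1',\eta_1''$ are supported on the inner transition annulus of width $\sim\eps$ plus a unit-scale outer transition near $r=3/2$, whereas $\eta_2',\eta_2''$ are supported on two angular boundary layers of total width $\sim\beta$.

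The purely radial contributions (those involving only $\eta_1,\eta_1',\eta_1''$ and $\eta_2$) are absorbed by the polar weight: splitting at $r=\eps$ one has $\int(r\eta_1'')^2\,r\,dr\lesssim\eps^{-4}\cdot\eps^4+1\lesssim 1$, and the $\eta_1'$ and $\eta_1/r$ pieces give at worst $\int_h^2 \eta_1^2/r\,dr\lesssim\log(1/h)$. The cross term contributes $\int(\eta_1')^2\,r\,dr\cdot\int(\eta_2')^2\,d\varphi\lesssim 1\cdot\beta^{-1}=\log(1/h)$. The only summand forcing a higher logarithmic power is the pure angular second derivative,
\[
\int_{2S}\tfrac{\eta_1^2}{r^2}|\eta_2''|^2\,r\,dr\,d\varphi=\Bigl(\int_h^{2}\tfrac{\eta_1^2}{r}\,dr\Bigr)\Bigl(\int|\eta_2''|^2\,d\varphi\Bigr)\lesssim\log(1/h)\cdot\beta\cdot\beta^{-4}=(\log(1/h))^4,
\]
the two factors being the standard logarithmic radial integral on $(h,2)$ and the Sobolev bookkeeping for a boundary layer of angular width $\beta$. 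Taking square roots gives $\|\nabla^2\Phi\|_{L^2(2S)}\lesssim(\log(1/h))^2$, and the analogous (easier) estimates for $\|\Phi\|_{L^2}$ and $\|\nabla\Phi\|_{L^2}$ are dominated by this. I do not anticipate a substantive obstacle; the only point requiring care is the bookkeeping of the three independent scales $h,\eps,\beta$ against the polar weight, to recognize that the $(\log(1/h))^2$ loss is saturated by the angular regularization at scale $\beta$, while the radial one at scale $\eps$ is absorbed by the factor $r$ appearing in $H_{rr}$.
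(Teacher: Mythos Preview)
Your proof is correct and follows essentially the same route as the paper: both arguments pass to polar coordinates, compute the Hessian of $\Phi=r\eta_1(r)\eta_2(\varphi)$ in the orthonormal frame, and estimate the resulting terms against the area element $r\,dr\,d\varphi$, identifying the pure angular term $\eta_1^2 r^{-2}|\eta_2''|^2$ as the one that saturates the $(\log\tfrac1h)^4$ bound for $\|\nabla^2\Phi\|_{L^2}^2$. Your computation is in fact slightly tidier, since by differentiating $\Phi$ directly (rather than applying the product rule to $\rho\cdot\eta$ as the paper does) the $\eta_1\eta_2'/r$ cross term cancels and you avoid one intermediate contribution that the paper lists separately.
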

\begin{proof}
The first two statements are obvious from our definition. To prove
  the last one,  we use polar coordinates $\rho,\theta$. We have $\Phi=\eta_1(\rho)\eta_2(\theta) \rho $ and calculate
  \[
    \begin{split}
    \nabla^2\Phi&=\rho\nabla^2\eta +\nabla\eta\otimes e_\theta + e_\theta\otimes \nabla\eta+\frac{\eta}{\rho} e_\theta^\bot\otimes e_\theta^\bot\\
    \nabla\eta&=\eta_1'\eta_2 e_\theta+\frac1\rho \eta_1\eta_2'\\
    \nabla^2\eta&= \eta_1''\eta_2 e_\theta\otimes e_\theta+\left(\eta_1'\eta_2'+\frac{\eta_1\eta_2'}{\rho}\right)\left(e_\theta\otimes e_\theta^\bot+e_\theta^\bot\otimes e_\theta\right)+\left(\frac{\eta_1'\eta_2}{\rho}+\frac{\eta_1\eta_2''}{\rho^2}\right)e_\theta^\bot\otimes e_\theta^\bot
  \end{split}
\]
and hence we get the estimate
\[
  \begin{split}
  \|\nabla^2\Phi\|_{L^2(2S)}^2&\lesssim
  \int_h^2\int_{-\alpha}^\alpha \left(|\rho\eta_1''\eta_2|^2+\left|\frac{\eta_1\eta_2'}{\rho}\right|^2+|\eta_1'\eta_2'|^2+|\eta_1'\eta_2|^2+\left|\frac{\eta_2''\eta_1}{\rho}\right|^2+\left|\frac{\eta_1\eta_2}{\rho}\right|^2\right)\rho\d\theta\d\rho\\
  &\lesssim 1+\left(\log\frac{1}{h}\right)^2+\log\frac{1}{h}+1+\left(\log\frac{1}{h}\right)^4+\log\frac1h\,.
\end{split}
  \]

The remaining estimates $\|\nabla \Phi\|_{L^2(2S)}^2 + \|\Phi\|_{L^2(2S)}^2 \lesssim \left(\log \frac1h \right)^4$ follow from the Poincar\'e inequality.
 \end{proof}

 %        For $\rho\geq 0$, we write $S_{\xi,\rho}=\{re_\varphi:r\in[\rho,1], \varphi\in I_\xi\}$. The ``lateral'' boundary of $S_{\xi,\rho}$ is denoted by
 % \[
 %   \partial^l S_{\xi,\rho}=\partial S_{\xi,\rho}\setminus\left(\rho\Gamma\cup\Gamma\right)=\{re_\varphi: r\in (\rho,1),\varphi\in \partial I_\xi\}\,.
 %   \]

% Here we may assume that $v\in C^2(S_h)$ is the restriction of a strictly convex function, since $\supp \Phi$ is contained in a compact convex subset of $2S$. We will assume this throughout the rest of the proof.

\section{Estimating the set of bad angles}

Here we show that if the energy is small, most angles $\varphi\in (-\alpha+\beta,\alpha-\beta)$ are good.

\begin{proposition}\label{prop: good angles}
  Assume that $v \in\mathcal A^*$  and that $J_h(v) \leq 3\alpha \log\frac1h$. Then the size of the set of bad angles can be estimated by
  \[
    \L^1(\mathcal B)\lesssim \left(\log \frac1h\right)^{-1} \,.
    \]
\end{proposition}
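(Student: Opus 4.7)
The plan is to combine the curl--curl Monge--Amp\`ere identity from the proof of Corollary~\ref{cor:monge-ampere} with the area formula for the diffeomorphism $\nabla v$, using the test function $\Phi$ constructed in Subsection~\ref{sec:defin-suit-test}.

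The first step is to plug $\Phi$ into the same integration-by-parts as in the corollary, obtaining
\[
  \int_{2S}\Phi\det\nabla^2 v\,\mathrm dx\le\tfrac12\,\|\nabla u+\nabla u^T+\nabla v\otimes\nabla v\|_{L^2(2S)}\,\|\nabla^2\Phi\|_{L^2(2S)}\,.
\]
The hypothesis $\tilde E_h(u,v)\le h^2(\log\tfrac1h)^2$ bounds the first factor by $h\log\tfrac1h$, and the estimate of Subsection~\ref{sec:defin-suit-test} bounds the second by $(\log\tfrac1h)^2$, so $\int_{2S}\Phi\det\nabla^2v\,\mathrm dx\lesssim h(\log\tfrac1h)^3$.

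The second step is to rewrite this integral via the area formula. Since $v$ is smooth and strictly convex on $S_h^1$, $\nabla v\colon S_h^1\to\R^2$ is a global diffeomorphism onto its image; moreover along each $L_\varphi$ the radial magnitude $r=|\nabla v|$ is monotone in arclength, as $\gamma'=r'\,(\nabla^2 v)^{-1}e_\varphi$ there. Parameterizing $L_\varphi$ by $r\in[a_\varphi,b_\varphi]$ (an interval containing $r=1$) and using polar coordinates $p=re_\psi$ on the image yields
\[
  \int_{S_h^2}\Phi\det\nabla^2 v\,\mathrm dx=\int_{-\alpha+3\beta}^{\alpha-3\beta}\int_{a_\varphi}^{b_\varphi}\Phi(x(r,\varphi))\,r\,\mathrm dr\,\mathrm d\varphi\,,
\]
after restricting to interior $\varphi$ so that $L_\varphi\subset S_h^2$, where $\Phi(x)=|x|\ge\eps$. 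Combining with the bound from the first step,
\[
  \int(b_\varphi^2-a_\varphi^2)\,\mathrm d\varphi\;\lesssim\;\frac{h(\log\tfrac1h)^3}{\eps}\;=\;\beta^3\,.
\]
The thin slab of peripheral angles of total measure $\lesssim\beta$ not covered by the restriction can be absorbed into the final conclusion.

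The third step is to show that every bad angle $\varphi\in\mathcal B$ contributes at least $\gtrsim\beta^2$ to the preceding integral, which will yield $\L^1(\mathcal B)\lesssim\beta^3/\beta^2=\beta=(\log\tfrac1h)^{-1}$ and complete the proof. Since $1\in[a_\varphi,b_\varphi]$, this lower bound must be verified case by case. If $|y_\varphi|=\eps$ and $r_y:=|\nabla v(y_\varphi)|<1-\beta$, then $[r_y,1]\subseteq[a_\varphi,b_\varphi]$ and so $b_\varphi^2-a_\varphi^2\ge 1-(1-\beta)^2\ge\beta\ge\beta^2$. If $y_\varphi$ is the critical point of $v$, then $r_y=0$ and $b_\varphi^2-a_\varphi^2=1$. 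The remaining case is when $y_\varphi$ lies on a radial side $\arg y_\varphi=\varphi\pm\beta$ of $S_{h,\varphi}$. Here I would apply the tangent-plane inequalities for $v$ at $e_\varphi$ and $e_{\varphi\pm\beta}$, using the boundary values $v\equiv 1$ and $\nabla v(e_\theta)=e_\theta$ on the outer arc, to deduce $r_y\le(1-|y_\varphi|)/(1-|y_\varphi|\cos\beta)$ and hence $1-r_y\gtrsim|y_\varphi|(1-\cos\beta)$; combined with the constraint $|y_\varphi|>\eps$ and a refined geometric argument exploiting the angular sweep $\beta$ of $L_\varphi$, this should yield $b_\varphi^2-a_\varphi^2\gtrsim\beta^2$.

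The hard part is this last case. The pointwise convexity estimate only gives $1-r_y\gtrsim|y_\varphi|\beta^2$, which degenerates as $|y_\varphi|\searrow\eps$, i.e.\ in the subcase where $L_\varphi$ physically sweeps from the outer arc almost down to the inner arc before being diverted sideways out of the wedge. Closing the estimate uniformly will require a more delicate argument, presumably using both that $L_\varphi$ traverses the whole angular width $\beta$ of $S_{h,\varphi}$ while $\nabla v$ stays on a single ray, and an auxiliary use of the bending term to quantify how far $r_y$ must fall below $1$.
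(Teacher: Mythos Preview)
Your first two steps are correct and match the paper: you reach
\[
  \int_{S_h^2}|x|\det\nabla^2 v\,\d x\;\lesssim\; h\Bigl(\log\tfrac1h\Bigr)^3\,,
\]
and the change of variables via $\nabla v$ is the right move. The easy cases ($|y_\varphi|=\eps$ with $|\nabla v(y_\varphi)|<1-\beta$, and $y_\varphi$ a critical point) are also handled correctly.

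The gap is exactly where you locate it: the lateral-exit case. Your own diagnosis is accurate---the pointwise convexity bound gives only $1-r_y\gtrsim|y_\varphi|\beta^2$, which degenerates at the inner boundary---but your proposed remedies (a sharper pointwise bound, or importing the bending term) are not what closes the argument. The missing idea is that you should \emph{not} discard the weight $|x|$ by bounding $|x|\ge\eps$ before integrating in $r$. Keep it, and compute the inner integral directly: with $\gamma_\varphi$ an arclength parametrization of $L_\varphi$ and $\nabla v(\gamma_\varphi(s))=|\nabla v(\gamma_\varphi(s))|\,e_\varphi$, an integration by parts in $s$ gives
\[
  \int_{r_y}^{1}\bigl((\nabla v)^{-1}(re_\varphi)\bigr)\cdot e_\varphi\,\d r
  \;=\;-\int_0^{|L_\varphi|}\gamma_\varphi(s)\cdot e_\varphi\,\frac{\d}{\d s}\bigl|\nabla v(\gamma_\varphi(s))\bigr|\,\d s
  \;=\;v(y_\varphi)-y_\varphi\cdot e_\varphi\,r_y\,.
\]
Now convexity at $e_{\tilde\varphi}$ (where $\tilde\varphi=\arg y_\varphi=\varphi\pm\beta$) gives $v(y_\varphi)\ge y_\varphi\cdot e_{\tilde\varphi}$, so the last quantity is at least $y_\varphi\cdot(e_{\tilde\varphi}-e_\varphi)\ge |y_\varphi|(1-\cos\beta)\gtrsim\eps\beta^2$. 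Since $r_y\ge\tfrac12$ on $R_2$, this bounds the contribution of each $\varphi\in R_2$ to the \emph{original} integral $\int|x|\det\nabla^2v$ (not to the one already divided by $\eps$) from below by $\eps\beta^2$, and hence
\[
  \L^1(R_2)\;\lesssim\;\frac{h(\log\tfrac1h)^3}{\eps\beta^2}\;=\;\Bigl(\log\tfrac1h\Bigr)^{-1}\,.
\]
In short, the weight $|x|$ is doing real work in the lateral-exit case and cannot be replaced by the crude lower bound $\eps$; once you retain it, the argument closes with the membrane term alone and no appeal to bending is needed.
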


\begin{proof}
  We shall use the test function $\Phi$ constructed in Section \ref{sec:defin-suit-test} as follows: We extend $v$ to $Ev$. We use Lemma \ref{lemma: test function} to obtain
\begin{equation}\label{eq:8}
\begin{split}
h\sqrt{3\alpha \log \frac1h} \geq \|\det \nabla^2 Ev\|_{W^{-2,2}(2S)} \gtrsim & \frac{1}{\left(\log\frac1h\right)^{2}} \int_S \det \nabla^2 v(x) \Phi(x)\d x\\
\geq & \frac{1}{\left(\log\frac1h\right)^{2}} \int_{S_h} \det \nabla^2 v(x) |x| \d x
\end{split}
\end{equation}

Here we used the nonnegativity of both $\Phi$ and $\det \nabla^2 v$ to restrict the domain of integration.

Now note that bad angles in the lateral intervals $(-\alpha+\beta,-\alpha+2\beta)$ and $(\alpha - 2\beta, \alpha - \beta)$ contribute at most $2\beta$ to the size of $\mathcal B$. To deal with all other angles, we let
\[
  \begin{split}
    R_1&= \{\varphi\in (-\alpha+2\beta,\alpha-2\beta):|\nabla v(y_\varphi)|<1-\beta\}\\
    R_2&=\mathcal B \cap (-\alpha+2\beta, \alpha - 2\beta)\setminus R_1\,.
  \end{split}
\]
Note  that  $L_\varphi$ exits $S_{h,\varphi}$ laterally for $\varphi\in R_2$. 
Starting from  \eqref{eq:8} we obtain the following estimate,
      \[
      \begin{split}
      h\left(\log\frac1h\right)^{5/2} &\gtrsim  \int_{S_h} \det \nabla^2 v(x)|x|\,\d x\\
   &\gtrsim  \e \L^2(\nabla v[S_h]) \\
   &\gtrsim  \e\int_{R_1} 1-|\nabla v(y_\varphi)|^2\,\d\varphi\\
   &\gtrsim \e\beta\L^1(R_1)\,.
\end{split}
\]
Here  we have used the fact that $\nabla v$ is a diffeomorphism 
% monotone function (i.e., $A\subset B$ iff $\nabla v(A)\subset \nabla v(B)$)
thanks to the convexity and smoothness of $v$, so that in particular $|\nabla v| \geq |\nabla v(y_\varphi)|$ on $L_\varphi$. Also, to obtain the third inequality, we have used polar coordinates in the codomain of $\nabla v$.
 
Inserting the definition of $\e, \beta$, we see that
\[
\L^1(R_1)\lesssim \left(\log\frac1h\right)^{-5/2}.
\]
% We restrict ourselves to the set
% \[
%   J := \{\varphi \in (-\alpha + 2\beta+\H^1(R), \alpha - 2\beta-\H^1(R))\}\setminus R
% \]
% and shall show that 
%  \[
%    |y_\varphi|=\e \quad\text{ for all }\varphi\in J\setminus R \,. 
%    \]
%    This proves the claim of the proposition.

% Obviously these two claims are just mirror images of one another, so it suffices to prove the first one. Assume $\xi\in R_2$ with $\xi>-\alpha+2\beta$ (if there exists no such $\xi$ we are done), and write $y_\xi = re_{-\alpha + \beta}$, with $r>\e$. Then for every $\varphi\in \tilde J:=  (-\alpha+2\beta,\xi)\setminus R_1$, we have that $\varphi\in R_2$ by the fact that the curves $L_\varphi$ are  the level sets of the  function $\arg_{S^1}\nabla v$ which is $C^1$ away from critical points of $\nabla v$, and hence as one-dimensional manifolds do not have any boundary points in the interior of $S_h$.
 
Again starting from \eqref{eq:8}, using a change of variables, and the fact that $|\nabla v(y_\varphi)|\geq 1- \beta \geq \frac12$ for $\varphi\in R_2$, we have
\[
 \begin{aligned}
 h\left(\log\frac1h\right)^{5/2} \gtrsim &  \int_{S_h} \det \nabla^2v(x)|x|\,\d x\\
 = & \int_{\nabla v[S_h]} |(\nabla v)^{-1}(y)|\,\d y\\
 \gtrsim & \int_{R_2} \int_{|\nabla v(y_\varphi)|}^1 |(\nabla v)^{-1}(re_\varphi)| r \,\d r\,\d \varphi\\
 \geq & \frac12 \int_{R_2 } \int_{|\nabla v(y_\varphi)|}^1 (\nabla v)^{-1}(re_\varphi)\cdot e_\varphi  \,\d r\,d\varphi\\
 = & \frac12 \int_{R_2} \int_0^{|L_\varphi|} -\gamma_\varphi(s) \cdot e_\varphi \frac{\d}{\d s} |\nabla v(\gamma_\varphi(s))|\,\d s\,\d \varphi. 
 \end{aligned}
 \]
 
 Here $\gamma_\varphi:[0,|L_\varphi|] \to S_h$ is the arc-length parametrization
 of $L_\varphi$ starting in $\gamma_\varphi(0) = e_\varphi$ and ending in
 $\gamma_\varphi(|L_\varphi|) = y_\varphi$. We note that $\frac{\d}{\d s}|\nabla
 v(\gamma_\varphi(s))| < 0$ by the strict convexity of $v$. Since for all
 $\varphi\in R_2$, we have that $|\arg y_\varphi-\varphi|=\beta$, (i.e., $L_\varphi$ exits $S_{h,\varphi}$ laterally) we can use Lemma \ref{lem:deficitestim} below to obtain a uniform lower bound for the interior integral:
 
\[
\int_0^{|L_\varphi|} -\gamma_\varphi(s) \cdot e_\varphi \frac{\d}{\d s} |\nabla
v(\gamma_\varphi(s))|\,\d s  \gtrsim   \beta^2 \e\,.
\]

Hence we get
\[
  \L^1(R_2)\lesssim \frac{h\left(\log\frac1h\right)^{5/2}}{\e \beta^2}\lesssim \left(\log \frac1h\right)^{-3/2}\,,
\]
and summing up the lengths of the three components of $\mathcal{B}$,
\[
\L^1(\mathcal{B}) \leq 2\beta + \L^1(R_1) + \L^1(R_2) \lesssim \left(\log \frac1h\right)^{-1},
\]
which proves the proposition.
  \end{proof}

  \begin{lemma}
    \label{lem:deficitestim}
Let $\varphi,\tilde\varphi\in(-\alpha,\alpha)$ be such that $\arg y_\varphi=\tilde\varphi$ and $|y_\varphi|>\e$. Then

    \begin{equation}
    \label{eq:4}
    -\int_0^{|L_{\varphi}|}\gamma_\varphi(s)\cdot e_\varphi\frac{\d}{\d s}|\nabla v(\gamma_\varphi(s))|\d s\gtrsim \e (\varphi-\tilde\varphi)^2 \,.
  \end{equation}

\end{lemma}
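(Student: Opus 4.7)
The plan is to reduce the integral to a boundary term via an exact primitive, then to invoke the cone-envelope lower bound $v(z)\geq|z|$ that the clamped boundary conditions force.

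First I would introduce the auxiliary function $F(s) := v(\gamma_\varphi(s)) - \gamma_\varphi(s)\cdot\nabla v(\gamma_\varphi(s))$, which is just (minus) the Legendre transform $v^{*}$ of $v$ evaluated along the ray $\nabla v(\gamma_\varphi(s)) = f(s)e_\varphi$, where $f(s) := |\nabla v(\gamma_\varphi(s))|$. A direct computation shows that the $\gamma_\varphi'$-terms cancel, yielding $F'(s) = -f'(s)\,\gamma_\varphi(s)\cdot e_\varphi$. Integrating from $0$ to $|L_\varphi|$ and using $v(e_\varphi)=1$ and $\nabla v(e_\varphi)=e_\varphi$ (so $F(0)=0$), the integral in the lemma collapses to a pure boundary expression:
\[
-\int_0^{|L_\varphi|}\gamma_\varphi(s)\cdot e_\varphi\,f'(s)\,ds \;=\; v(y_\varphi) - y_\varphi\cdot\nabla v(y_\varphi) \;=\; v(y_\varphi) - a r\cos(\varphi-\tilde\varphi),
\]
where $r = |y_\varphi|$ and $a = |\nabla v(y_\varphi)|$.

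Next I would use convexity of $v$ on $S$ together with the boundary data to lower bound $v(y_\varphi)$: for any $e_\theta\in\Gamma$, the subgradient inequality gives $v(z)\geq v(e_\theta) + \nabla v(e_\theta)\cdot(z - e_\theta) = e_\theta\cdot z$ for every $z\in S$ and $\theta\in[-\alpha,\alpha]$. Since $y_\varphi = re_{\tilde\varphi}$ with $\tilde\varphi\in(-\alpha,\alpha)$ (by construction of $S_{h,\varphi}$ and the range $\varphi\in(-\alpha+2\beta,\alpha-2\beta)$), the choice $\theta=\tilde\varphi$ yields $v(y_\varphi)\geq r$. Strict convexity of $v$ also forces $f$ to be decreasing along $\gamma_\varphi$, hence $a\leq f(0) = 1$. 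Combining these with Step~1,
\[
I \;\geq\; r - a r\cos(\varphi-\tilde\varphi) \;=\; r\bigl(1 - a\cos(\varphi-\tilde\varphi)\bigr) \;\geq\; r\bigl(1-\cos(\varphi-\tilde\varphi)\bigr) \;\gtrsim\; r(\varphi-\tilde\varphi)^2,
\]
using $1-\cos\theta \gtrsim \theta^2$ for $|\theta|\leq \pi/2$. Since $r = |y_\varphi| > \e$ by hypothesis, the claimed inequality $I\gtrsim \e(\varphi-\tilde\varphi)^2$ follows.

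The only non-obvious ingredient is spotting the primitive $F$; once written down, the identity $F' = -f'\,\gamma_\varphi\cdot e_\varphi$ is one line, and the rest reduces to controlling two boundary values of $F$ via the clamped boundary conditions and the cone envelope $v\geq |x|$. Notably, no delicate analysis of the curve $L_\varphi$ in its interior is required, only the values of $v$ and $\nabla v$ at its endpoints.
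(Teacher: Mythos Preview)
Your proposal is correct and is essentially the paper's own proof. Both arguments reduce the integral via integration by parts (your primitive $F(s)=v(\gamma_\varphi(s))-\gamma_\varphi(s)\cdot\nabla v(\gamma_\varphi(s))$ is exactly what the paper's IBP produces) to the endpoint expression $v(y_\varphi)-y_\varphi\cdot e_\varphi\,|\nabla v(y_\varphi)|$, then bound $v(y_\varphi)\geq y_\varphi\cdot e_{\tilde\varphi}=|y_\varphi|$ using convexity and the boundary data at $e_{\tilde\varphi}$, and finally use $|\nabla v(y_\varphi)|\leq 1$ together with $|y_\varphi|>\e$ to conclude.
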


\begin{proof}
  We will repeatedly use the fact that $\nabla v(\gamma_\varphi(s))=e_\varphi |\nabla v(\gamma_\varphi(s))|$.
By an integration by parts,
\[
  \begin{split}
  -\int_0^{|L_\varphi|}\gamma_\varphi(s)\cdot e_\varphi\frac{\d}{\d s}|\nabla v(\gamma_\varphi(s))|\d s &= \int_0^{|L_\varphi|} \gamma_\varphi'\cdot e_\varphi|\nabla v(\gamma_\varphi(s))| \d s-\big[\gamma_\varphi(s)\cdot e_\varphi |\nabla v(\gamma_\varphi(s))|\big]_0^{|L_\varphi|}\\
  &=(v(y_\varphi)-1)-(e_\varphi\cdot y_\varphi|\nabla v(y_\varphi)|-1)\\
  &=v(y_\varphi)-y_\varphi\cdot e_\varphi |\nabla v(y_\varphi)|\,.
\end{split}
  \]
  Using the convexity of $v$, we have that
  \[
    \begin{split}
    v(y_\varphi)&\geq v(e_{\tilde\varphi})+(y_\varphi- e_{\tilde\varphi})\cdot e_{\tilde \varphi}\\
    &=y_\varphi\cdot e_{\tilde\varphi}\,.
  \end{split}
    \]
    Inserting this in the previous equation, we obtain
 \[
   \begin{split}
-\int_0^{|L_\varphi|}\gamma_\varphi(s)\cdot e_\varphi\frac{\d}{\d s}|\nabla v(\gamma_\varphi(s))|\d s
    &\geq y_\varphi\cdot (e_{\tilde\varphi}-e_\varphi |\nabla v(y_\varphi)|)\\
    &\geq \e e_{\tilde\varphi}\cdot (e_{\tilde\varphi}-e_\varphi)\\
    &\gtrsim \e (\varphi-\tilde \varphi)^2\,,
  \end{split}
  \]
  proving our claim.
% \begin{equation}
% \begin{aligned}
%     &-\int_{ L_{\varphi}^{\xi,\e}}\Delta(z)\kappa(z)\d\H^1(z) = \int_{L_\varphi^{\xi,\e}} |\nabla v(z)| e_\varphi \cdot \dot\gamma_\varphi^{\xi,\e}((\gamma_\varphi^{\xi,\e})^{-1}(z)) - e_\varphi \cdot \dot\gamma_\varphi^{\xi,\e}((\gamma_\varphi^{\xi,\e})^{-1}(z))\d\H^1(z)\\ = &\int_0^{\H^1(L_\varphi^{\xi,\e})} \frac{d}{ds} (v(\gamma_\varphi^{\xi,\e}(s)) - \gamma_\varphi^{\xi,\e}(s)\cdot e_\varphi)\d s = v(y_\varphi^{\xi,\e}) - y_\varphi^{\xi,\e}\cdot e_\varphi.
% \end{aligned}
% \end{equation}
% Here we used the fact that on $L_\varphi^{\xi,\e}$, $\nabla v(z) = |\nabla v(z)|e_\varphi$. We can now use the convexity of $v$ together with its boundary values to estimate
% \begin{equation}
% v(y_\varphi^{\xi,\e}) - y_\varphi^{\xi,\e}\cdot e_\varphi \geq 1 + (y_\varphi^{\xi,\e}-e_{\xi^-})\cdot e_{\xi^-} -  y_\varphi^{\xi,\e}\cdot e_\varphi \geq \frac{1}{4}(\varphi - \xi^-)^2 |y_\varphi^{\xi,\e}|
% \end{equation}
% This shows the claim.
\end{proof}

\section{Proof of Theorem \ref{thm:mainma}}

% With Proposition \ref{prop: good angles} in place, we are ready to prove our
% main theorem.

\begin{proof}[Proof of  Theorem \ref{thm:mainma}]
  We first show the upper bound $\inf_{v\in \mathcal{A}} J_h(v) \leq 2\alpha \log\frac1h + C$.

Take the function $v_h\in \mathcal{A}$ from the proof of Corollary \ref{cor:FvK},
\[
v_h(x) := \begin{cases}
|x| &|x| \geq h\\
\frac{|x|^2}{2h} + \frac{h}2 &|x|<h. 
\end{cases}
\]

The bending energy was already estimated in \eqref{eq: FvK hessian}. We now turn to calculating 
\begin{equation}\label{eq:9}
  \begin{split}
    \frac1{h^2} \|\det \nabla^2 v_h\|_{W^{-2,2}(2S)}^2 &= \frac1{h^2}\sup_\phi \left( \int_{2S} \det\nabla^2 v_h\phi(x) \d x \right)^2\\
  &  = \frac1{h^2}\sup_\phi \left( \int_{S\cap B(0,h)} \frac1{h^2}\phi(x) \d x \right)^2,
\end{split}
\end{equation}

where the supremum is taken over all $\phi \in C_c^\infty(2S)$ with $\|\nabla^2\phi\|_{L^2(2S)} \leq 1$.
We use polar coordinates $\rho,\theta$ to integrate $\partial_\theta^2\phi$ in angular direction and obtain
\[
  \begin{split}
  \int_{B(0,h)\cap S} \phi(x)\d x&= \int_0^h\rho\d\rho \int_{-\alpha}^\alpha \d \varphi_1\int_{-\alpha}^{\varphi_1}\d\varphi_2\int_{-\alpha}^{\varphi_2}\d\theta \partial_{\theta}^2\phi(\rho,\theta)\\
&  \lesssim \alpha^2 \int_0^h\rho\d\rho \int_{-\alpha}^\alpha\d\theta |\rho^{-2}\partial_{\theta}^2\phi(\rho,\theta)|\rho^2\\
&\lesssim \alpha^2 \|\rho^2\|_{L^2(B(0,h)\cap S)}\|\nabla^2\phi\|_{L^2(S)}\\
&\lesssim \alpha^2 h^3\,.
\end{split}
\]
Inserting in \eqref{eq:9} yields
% By the Sobolev-Poincar\'e inequalities for negative Sobolev spaces we have $\|\phi\|_{C^{0,1/2}(2S)} \lesssim \|\nabla^2\phi\|_{L^2(2S)}$, where the constant is independent of $\alpha$. Since $\phi = 0$ on $\partial 2S$, $\phi(x)\lesssim |x|\sqrt{\alpha}$ for all valid test functions. Inserting this estimate into the energy we obtain
\[
\frac1{h^2} \sup_\phi\left( \int_{S\cap B(0,h)} \frac1{h^2}\phi(x) \d x \right)^2 \lesssim % \frac1{h^2} \left(\alpha h^2 \frac1{h^2} h\sqrt{\alpha} \right)^2 = 
\alpha^4.
\]

In total, $J_h(v_h) \leq 2\alpha \left(\log\frac1h + 2\right) + C\alpha^4$.
\medskip

Now we prove the lower bound.

\medskip

  Assume that $v_0\in\mathcal A$ satisfies the upper bound $J_h(v)\leq 2\alpha\left(\log\frac1h+C\right)$. By Lemma
\ref{lemma: smooth}, we may consider instead $v\in\mathcal A^*$ with $J_h(v)\leq J_h(v_0)+\delta$ with $\delta$ arbitrarily small.
We claim that for  $s\in (\e,1)$,   the curve
\[
  \begin{split}
    \gamma_s:(-\alpha+\beta,\alpha-\beta)&\to \R^2\\
    \varphi&\mapsto \nabla v(se_\varphi)
  \end{split}
\]
satisfies

  \begin{equation}\label{eq:1}
  \H^1(\mathrm{Im}\gamma_s)\geq 2\alpha-C\left(\log \frac1h\right)^{-1}\,.
\end{equation}
Indeed, for every $\varphi\not\in \mathcal B$, we
 have that
  \[
    \mathrm{Im}\gamma_s \cap \arg^{-1}(\varphi)\setminus
    B(0,1-\beta)\neq\emptyset\,. \]
   Hence, using Proposition \ref{prop:
    good angles}, we have that
  \[
    \begin{split}
      \H^1(\mathrm{Im}\gamma_s)&\geq
      \H^1\left(\gamma_s\left((-\alpha+2\beta,\alpha-2\beta)\setminus\mathcal
          B\right)\right)\\
      &\geq (1-\beta) \H^1\left((-\alpha+2\beta,\alpha-2\beta)\setminus\mathcal
        B\right)\\
      &\geq 2\alpha-C\left(\log \frac1h\right)^{-1}\,.
    \end{split}
    \]

% We have seen in the previous section that at low enough energy, for most $\varphi \in (-\alpha + \beta,\alpha-\beta)$ gradients of the form $re_\varphi$, $r \geq 1-\beta$ are present on all arcs $A_s \coloneqq\{se_\varphi\,:\,\varphi\in (-\alpha+\beta,\alpha-\beta)\}$. From this we easily check that
% \[
% \int_{-\alpha+\beta}^{\alpha-\beta} \frac{1}{s^2}|\partial_\varphi (\nabla v(se_\varphi))|^2\,d\varphi \geq (1-o(1)) \frac{2(\alpha-\beta)}{s}.
% \]
% Integrating over $s\in(\e,1)$ then shows the lower bound. The following lemma makes this precise:
% \begin{lemma}\label{lemma: curve length}
% Let $I$ be an interval, $\gamma\subset\R^d$ be a rectifiable curve, $p\in[1,\infty)$, $(v_n)_{n\in \N} \subset W^{1,p}(I;\R^d)$ a sequence of curves such that for every $\delta>0$ there is $N\in \N$ such that $B(v_n[I], \delta) \supset \gamma$ for all $n\geq N$. Then
% \[
% \liminf_{n\to \infty} \int_I |\dot v_n|^p\,dt \geq \frac{\H^1(\gamma)^p}{\Lm^1(I)^{p-1}}.
% \]
% \end{lemma}

% \begin{proof}
% Assume the contrary. Find a subsequence and a sequence $\delta_n \to 0$ such that $\gamma \subset B(v_n,\delta_n)$. Replace $v_n$ by a piecewise affine $w_n\in W^{1,\infty}(I;\R^d)$ created by interpolating between vertain $v_n(t_i)\in B(\gamma,\delta_n)$, lowering the energy. A subsequence of $w_n$ converges weakly to a curve $w\in W^{1,p}(I;\R^d)$ covering $\gamma$. Then $\H^1(\gamma)^p \leq \Lm^1(I)^{p-1} \int_I |\dot w|^p\,dt$ by H\"older's inequality.
% \end{proof}

% We are now finally able to prove the lower bound:

Now we estimate the
bending energy as follows, using Jensen's inequality and \eqref{eq:1}:
\[
  \begin{split}
    \int_{S_h^1} |\nabla^2 v|^2\,\d x &\geq \int_{-\alpha+\beta}^{\alpha-\beta}\int_\e^1
    \frac{1}{s^2}|\partial_\varphi\nabla
    v(se_\varphi)|^2 s \d s \d\varphi\\
    &\geq \int_\e^1 \frac{\d s}{s}
    \frac{1}{2\alpha}\left(\int_{-\alpha+\beta}^{\alpha-\beta}|\partial_\varphi
      \nabla v(s e_\varphi)|\d\varphi\right)^2\\
    &\geq \int_\e^1 \frac{\d s}{2\alpha s}
    \left(\max \left(2\alpha-C\left(\log\frac1h\right)^{-1},0\right)\right)^2\\
    &\geq 2\alpha\log\frac1h-C\log\log\frac1h\,.
  \end{split}
\]
This completes the proof of  Theorem \ref{thm:mainma}.
% Then for any $\delta > 0$, by Proposition \ref{prop: good angles}, the curves $(-\alpha + \delta,\alpha -\delta) \ni \varphi \mapsto \nabla v_h(se_\varphi)$ for $s\in (\e,1)$ uniformly satisfy the assumptions of Lemma \ref{lemma: curve length} with $\gamma = \{e_\varphi\,:\,\varphi\in (-\alpha + \delta,\alpha - \delta)\}$, so that

% Letting $\delta\to 0$ shows the claim.
\end{proof}

\renewcommand{\e}{\varepsilon}

\section*{Acknowledgments}
This work has been supported by  Deutsche Forschungsgemeinschaft (DFG, German
Research Foundation) as part of project 350398276.

\bibliographystyle{alpha}

\bibliography{ccsector}

\end{document}